\newtheorem{theorem}{Theorem}[section]
\newtheorem{lemma}[theorem]{Lemma}
\newtheorem{question}[theorem]{Question}
\newtheorem{remark}[theorem]{Remark}
\newtheorem{corollary}[theorem]{Corollary}
\newtheorem{example}[theorem]{Example}
\newtheorem{problem}[theorem]{Problem}
\def\to{\rightarrow}
\def\f{\mathfrak}
\def\c{\mathcal}
\def\b{\mathbf}
\def\r{\mathrm}
\def\bb{\mathbb}
\date{}
\begin{document}
\baselineskip16pt
\title{Some Inequalities for Continuous Algebra-Multiplications on a Banach Space}
\author{Maysam Maysami Sadr\thanks{sadr@iasbs.ac.ir}}
\affil{Department of Mathematics, Institute for Advanced Studies in Basic Sciences, Zanjan, Iran}
\maketitle
\begin{abstract}
In this short note, we first consider some inequalities
for comparison of some algebraic properties of two continuous algebra-multiplications on an arbitrary Banach space and then,
as an application, we consider some very basic observations on the space of all continuous algebra-multiplications on a Banach space.

\textbf{MSC 2010.} Primary 46H05; Secondary 46H20.

\textbf{Keywords.} Banach space; Banach algebra; continuous algebra-multiplication; moduli space of associative multiplications.

\end{abstract}
\section{Introduction}
One of the fundamental and hardest questions in the theory of Banach algebras is the following.
\begin{question}
Let $A$ be a Banach space and let $\star$ and $\diamond$ be two Banach algebra multiplications on $A$. How are algebraic
properties of the Banach algebras $(A,\star)$ and $(A,\diamond)$ related if $\star$ and $\diamond$ be sufficiently close?
\end{question}
This question which is the central subject of perturbation theory of Banach algebras, has been considered by many authors.
See, for instance, \cite{Jarosz1,Johnson1} and references therein.

In this short note, we first consider some inequalities
for comparison of some algebraic properties of two continuous algebra-multiplications on an arbitrary Banach space and then,
as an application, we consider some very basic observations on the space of all continuous algebra-multiplications on a Banach space.

\textbf{Conventions \& Notations.}
Operator norm of bounded linear and bilinear operators, all are denoted by $|\cdot|$.
Throughout,  $E$ denotes a non-zero Banach space. The Banach space of bounded linear operators on $E$ is denoted by $\r{B}(E)$, and
the Banach space of bounded bilinear operators from $E\times E$ into $E$ is denoted by $\r{B}_2(E)$.
Suppose that $*\in\r{B}_2(E)$ is associative as a binary operation on $E$.
Then $*$ is called continuous multiplication on $E$.
We denote by $\r{Mltp}(E)\subset\r{B}_2(E)$ the set of all continuous multiplications on $E$.
For $*\in\r{Mltp}(E)$, the pair $(E,*)$ is called complete normed algebra.
If $|*|\leq1$, then $*$ is called Banach algebra multiplication and $(E,*)$
is called Banach algebra. A multiplication $*\in\r{Mltp}(E)$ is called unital (resp. commutative) if the algebra $(E,*)$
is unital (resp. commutative).  We denote by $\r{Mltp}_\r{u}(E)\subset\r{Mltp}(E)$ (resp. $\r{Mltp}_\r{c}(E)\subset\r{Mltp}(E)$)
the set of unital (resp. commutative) continuous multiplications on $E$. For $*\in\r{Mltp}_\r{u}(E)$
we denote by $e_*$ the unite element of $(E,*)$.

For a complete normed algebra $(A,*)$ the products of elements of $A$, if there is no danger of misunderstanding about
the multiplication $*$, is denoted by juxtaposition. In the case that $(A,*)$ is unital, the inverse of an invertible element $a\in A$ is denoted
by $a^{-1}$ or by $a^{-1}_*$ in order to indicate that the inverse is with respect to $*$.
Also, the set of invertible elements of $(A,*)$ is denoted by $\r{Inv}(A,*)$, and for any $a\in A$, $\sigma_*(a)$ and $\rho_*(a)$
denote respectively the spectrum and resolvent of $a$.
\section{Some Inequalities}\label{1804022216}
We begin by a simple generalization of a famous lemma in Banach Algebra Theory
(\cite[Lemma 1.2.5]{BonsallDuncan1},\cite[Theorem 2.1.29]{Dales1},\cite[Theorem 10.11]{Rudin1}).
\begin{theorem}\label{1803162128}
Let $(A,*)$ be a unital complete normed algebra with unit $e=e_*$. Let $a\in A$ be invertible, $r$ be a numerical constant with
$0\leq r<1$, and let $b\in A$ be such that
$$\|b-a\|\leq r(\|a^{-1}\||*|^2)^{-1}.$$
Then, $b$ is invertible. Moreover,
$$\|b^{-1}-a^{-1}\|\leq(\|e\||*|+r(1-r)^{-1})|*|^2\|a^{-1}\|^2\|b-a\|.$$
\end{theorem}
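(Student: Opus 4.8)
The plan is to adapt the classical Neumann series argument; the only new feature is that $*$ is merely bounded rather than contractive, so every product picks up a factor of $|*|$ that has to be tracked carefully.

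First I would reduce the invertibility of $b$ to that of an element of the form $e+c$. Since $a$ is invertible, associativity gives $b = a*(a^{-1}*b)$, so $b$ is invertible as soon as $u := a^{-1}*b$ is (and then $b^{-1} = u^{-1}*a^{-1}$). Writing $b = a + (b-a)$ yields $u = e + c$ with $c := a^{-1}*(b-a)$, and the bilinear estimate together with the hypothesis gives $\|c\| \le |*|\,\|a^{-1}\|\,\|b-a\| \le r/|*|$, hence $|*|\,\|c\| \le r < 1$.

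Next I would show $e+c$ is invertible via the Neumann series $s := \sum_{n\ge 0}(-c)^n$. The point requiring care is the submultiplicativity defect: by induction one gets $\|c^n\| \le |*|^{n-1}\|c\|^n$ for $n\ge 1$, so $\sum_{n\ge0}\|(-c)^n\| \le \|e\| + \|c\|\sum_{n\ge1}(|*|\,\|c\|)^{n-1} = \|e\| + \|c\|/(1-|*|\,\|c\|) \le \|e\| + r/\big(|*|(1-r)\big)$, which is finite since $|*|\,\|c\|\le r<1$ and $E$ is complete. Using boundedness of $*$ to distribute over the convergent series, the partial sums of $(e+c)*s$ and of $s*(e+c)$ telescope to $e - (-c)^{N+1} \to e$, so $s = (e+c)^{-1}$. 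Consequently $b$ is invertible with $b^{-1} = s*a^{-1}$, and $\|b^{-1}\| \le |*|\,\|s\|\,\|a^{-1}\| \le \big(\|e\|\,|*| + r/(1-r)\big)\|a^{-1}\|$, where I use $\|s\| \le \|e\| + r/(|*|(1-r))$, the last estimate following from $\|c\|\le r/|*|$ and $1-|*|\,\|c\|\ge 1-r$.

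Finally, for the perturbation estimate I would use the identity $b^{-1} - a^{-1} = b^{-1}*(a-b)*a^{-1}$, which follows from associativity and $a*a^{-1} = b^{-1}*b = e$. Two applications of the bilinear bound give $\|b^{-1}-a^{-1}\| \le |*|^2\,\|b^{-1}\|\,\|a^{-1}\|\,\|b-a\|$, and inserting the bound on $\|b^{-1}\|$ from the previous step yields exactly $\big(\|e\|\,|*| + r(1-r)^{-1}\big)|*|^2\|a^{-1}\|^2\|b-a\|$. I do not anticipate a genuine obstacle; the only thing needing attention throughout is the bookkeeping of powers of $|*|$ (the estimate $\|c^n\|\le|*|^{n-1}\|c\|^n$ and the fact that $\|e\|\,|*|\ge 1$, the latter coming from $e=e*e$, which is what keeps the stated constant consistent with the slightly sharper $(1-r)^{-1}$ obtainable from $b^{-1}-a^{-1}=(s-e)*a^{-1}$).
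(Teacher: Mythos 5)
Your proof is correct and follows essentially the same route as the paper: reduce to inverting $e+c$ with $c=a^{-1}*(b-a)$, use the Neumann series with the estimate $\|c^n\|\leq|*|^{n-1}\|c\|^n$, and track the powers of $|*|$. The only cosmetic difference is in the last step, where you combine the identity $b^{-1}-a^{-1}=b^{-1}*(a-b)*a^{-1}$ with your bound on $\|b^{-1}\|$ instead of estimating $\sum_{n\geq1}c^na^{-1}$ directly as the paper does; both yield the identical constant.
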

\begin{proof}
Let $c:=e-a^{-1}b=a^{-1}(a-b)$. We have $\|c\|\leq\|b-a\|\|a^{-1}\||*|\leq r|*|^{-1}$. Thus,
$$|*|\sum_{n=1}^\infty\|c^n\|\leq\sum_{n=1}^\infty(|*|\|c\|)^n\leq\sum_{n=1}^\infty r^{n}=r(1-r)^{-1}.$$
This shows that $(\sum_{n=0}^\infty c^n)\in A$ is the inverse of $a^{-1}b=e-c$. It follows that $b\in\r{Inv}(A,*)$. Moreover,
$b^{-1}=\sum_{n=0}^\infty c^na^{-1}$, and we have,
\begin{equation*}
\begin{split}
\|b^{-1}-a^{-1}\|&=\|\sum_{n=1}^\infty c^na^{-1}\|\\
&\leq|*|^2\|a^{-1}\|\|c\|\|\sum_{n=0}^\infty c^n\|\\
&\leq|*|^2\|a^{-1}\|\|c\|(\|e\|+\sum_{n=1}^\infty\|c^n\|)\\
&\leq(\|e\||*|+r(1-r)^{-1})|*|^2\|a^{-1}\|^2\|b-a\|.
\end{split}
\end{equation*}
\end{proof}
We introduce some notations: Let $(E,*)$ be a complete normed algebra.
For any $x\in E$, let $\f{l}_{*,x}\in\r{B}(E)$ denote the left multiplication operator by $x$, i.e. $\f{l}_{*,x}(y)=x*y$. Similarly,
we let $\f{r}_{*,x}\in\r{B}(E)$ denote the right multiplication operator by $x$.
If $(E,*)$ is unital then we have,
$$\|x\|\|e_*\|^{-1}\leq|\f{l}_{*,x}|,|\f{r}_{*,x}|\leq|*|\|x\|.$$
Also, it is easily checked that $x\in\r{Inv}(E,*)$ iff both of $\f{l}_{*,x}$ and $\f{r}_{*,x}$ are invertible in the Banach algebra $\r{B}(E)$.
\begin{theorem}\label{1803312216}
Let $*,\diamond\in\r{Mltp}(E)$. Suppose that $*$ is unital and
for a numerical constant $r$ with $0\leq r<1$ we have $|\diamond-*|\leq r\|e_*\|^{-1}$.
Then, $\diamond$ is also unital. Moreover, letting $s=(1+r(1-r)^{-1})$, we have
\begin{equation}\label{1804052103}
\|e_\diamond-e_*\|\leq s|\diamond-*|\|e_*\|^2\leq rs\|e_*\|.
\end{equation}
\end{theorem}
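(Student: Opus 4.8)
The plan is to deduce the unitality of $\diamond$ from the invertibility, in the Banach algebra $\r{B}(E)$, of the left and right $\diamond$-multiplication operators by the old unit $e_*$, and then to read off $e_\diamond$ explicitly. First I would set $L:=\f{l}_{\diamond,e_*}$ and note that $\f{l}_{*,e_*}=I$, the identity operator on $E$, because $e_*$ is the $*$-unit. For every $y\in E$ one has $(L-I)y=e_*\diamond y-e_* * y=(\diamond-*)(e_*,y)$, so $|L-I|\leq|\diamond-*|\,\|e_*\|\leq r<1$; hence, by the Neumann series, $L$ is invertible in $\r{B}(E)$ with $L^{-1}=\sum_{n=0}^\infty(I-L)^n$. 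The same argument applies to $R:=\f{r}_{\diamond,e_*}$.

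Next I would manufacture the unit of $(E,\diamond)$. Put $w:=L^{-1}(e_*)$ and $w':=R^{-1}(e_*)$, so that $e_*\diamond w=e_*$ and $w'\diamond e_*=e_*$. Using associativity of $\diamond$ together with injectivity of $L$, the identity $e_*\diamond(w\diamond z)=(e_*\diamond w)\diamond z=e_*\diamond z$ forces $w\diamond z=z$ for every $z\in E$; symmetrically $z\diamond w'=z$ for all $z$, using injectivity of $R$. The classical one-line argument $w=w\diamond w'=w'$ then shows that $w$ is a two-sided identity, so $\diamond$ is unital with $e_\diamond=w=L^{-1}(e_*)$. I expect this passage --- from ``$\f{l}_{\diamond,e_*}$ and $\f{r}_{\diamond,e_*}$ invertible'' to ``$(E,\diamond)$ has a two-sided identity'' --- to be the only genuinely delicate point, since one cannot simply invoke the remark preceding the theorem (which already presupposes a unit); it has to be carried out by hand as above.

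Finally, for \eqref{1804052103} I would expand $e_\diamond-e_*=L^{-1}(e_*)-e_*=\sum_{n=1}^\infty(I-L)^n(e_*)$, so that
$$\|e_\diamond-e_*\|\leq\|e_*\|\sum_{n=1}^\infty\big(|\diamond-*|\,\|e_*\|\big)^n=\frac{|\diamond-*|\,\|e_*\|^2}{1-|\diamond-*|\,\|e_*\|}.$$
Since $|\diamond-*|\,\|e_*\|\leq r$ and $t\mapsto t/(1-t)$ is increasing on $[0,1)$, the right-hand side is at most $(1-r)^{-1}|\diamond-*|\,\|e_*\|^2$; and $(1-r)^{-1}=1+r(1-r)^{-1}=s$, which gives the first inequality of \eqref{1804052103}, while substituting $|\diamond-*|\leq r\|e_*\|^{-1}$ yields the second. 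Apart from this, the proof is a routine Neumann-series computation.
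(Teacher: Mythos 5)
Your proof is correct and follows essentially the same route as the paper: invertibility of $\f{l}_{\diamond,e_*}$ and $\f{r}_{\diamond,e_*}$ via the Neumann series (the paper quotes its Theorem \ref{1803162128} for this), identification of $e_\diamond=\f{l}_{\diamond,e_*}^{-1}(e_*)$, and the same norm estimate with $s=(1-r)^{-1}$. The only cosmetic differences are that you derive the two-sided unit from injectivity of $L$ and $R$ plus $w=w\diamond w'=w'$, whereas the paper uses surjectivity of $\f{r}_{\diamond,e_*}$ to exhibit a right unit; both arguments are valid.
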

\begin{proof}
We have
\begin{equation}\label{1804022112}
|\f{l}_{\diamond,e_*}-\r{id}|=|\f{l}_{\diamond,e_*}-\f{l}_{*,e_*}|\leq|\diamond-*|\|e_*\|\leq r.
\end{equation}
Thus, by Theorem \ref{1803162128}, $\f{l}_{\diamond,e_*}$ is invertible in $\r{B}(E)$.
Similarly, it is proved that $\f{r}_{\diamond,e_*}$ is invertible in $\r{B}(E)$.
By invertibility of $\f{l}_{\diamond,e_*}$, there is $e'\in E$ such that $e_*\diamond e'=e_*$. By invertibility of
$\f{r}_{\diamond,e_*}$, for every $x\in E$ there is $x'$ such that $x=x'\diamond e_*$. Thus, we have
$$x\diamond e'=(x'\diamond e_*)\diamond e'=x'\diamond(e_*\diamond e')=x'\diamond e_*=x.$$
This shows that $e'$ is a right unit of $(E,\diamond)$. Similarly, it is proved that $(E,\diamond)$ has a left unit. Thus, $(E,\diamond)$
is unital and $e_\diamond=e'$. It follows from (\ref{1804022112}) and Theorem \ref{1803162128} that
$$|\f{l}_{\diamond,e_*}^{-1}-\r{id}|\leq s|\diamond-*|\|e_*\|\leq rs.$$
Thus, $\|e_\diamond-e_*\|=\|\f{l}_{\diamond,e_*}^{-1}(e_*)-\r{id}(e_*)\|\leq s|\diamond-*|\|e_*\|^2\leq rs\|e_*\|$.
\end{proof}
We have the following easy estimate of distance between unit elements.
\begin{theorem}\label{1804052039}
Let $*,\diamond\in\r{Mltp}_\r{u}(E)$. Then,
$$\|e_\diamond-e_*\|\leq|\diamond-*|\|e_\diamond\|\|e_*\|.$$
\end{theorem}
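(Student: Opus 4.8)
The plan is to realise $e_\diamond - e_*$ as the value of the single bounded bilinear operator $\diamond - * \in \r{B}_2(E)$ at one cleverly chosen pair of points, and then invoke nothing more than the defining inequality $\|T(x,y)\| \le |T|\,\|x\|\,\|y\|$ for $T \in \r{B}_2(E)$. No perturbation hypothesis, no Neumann series, and no invertibility input will be needed; unlike Theorems \ref{1803162128} and \ref{1803312216}, the estimate is unconditional.

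The point is to evaluate the difference of the two products on the \emph{mixed} pair $(e_\diamond, e_*)$, so that each term collapses by a one-sided unit law: since $e_\diamond$ is the unit for $\diamond$ we get $e_\diamond \diamond e_* = e_*$, and since $e_*$ is the unit for $*$ we get $e_\diamond * e_* = e_\diamond$. Subtracting,
$$(\diamond - *)(e_\diamond, e_*) \;=\; e_\diamond \diamond e_* - e_\diamond * e_* \;=\; e_* - e_\diamond.$$
Applying the operator-norm inequality for bounded bilinear maps with $x = e_\diamond$, $y = e_*$ then gives $\|e_\diamond - e_*\| = \|e_* - e_\diamond\| \le |\diamond - *|\,\|e_\diamond\|\,\|e_*\|$, which is exactly the claim. (Symmetrically, the pair $(e_*, e_\diamond)$ yields $(\diamond - *)(e_*, e_\diamond) = e_\diamond - e_*$ and the same bound.)

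There is essentially no obstacle here: the whole content is the observation of which pair to substitute, after which the proof is a two-line computation followed by one application of the bilinear operator-norm bound. The only care required is bookkeeping of left versus right unit laws to make sure both products on the chosen pair simplify to one of the two units.
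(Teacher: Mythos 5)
Your proposal is correct and is exactly the paper's own argument: the paper writes $\|e_\diamond-e_*\|=\|e_\diamond*e_*-e_\diamond\diamond e_*\|\leq|\diamond-*|\,\|e_\diamond\|\,\|e_*\|$, which is precisely your evaluation of $\diamond-*$ at the mixed pair $(e_\diamond,e_*)$ followed by the bilinear operator-norm bound. Nothing further is needed.
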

\begin{proof}
We have, $\|e_\diamond-e_*\|=\|e_\diamond*e_*-e_\diamond\diamond e_*\|\leq|\diamond-*|\|e_\diamond\|\|e_*\|$.
\end{proof}
\begin{theorem}\label{1803162143}
Let $*\in\r{Mltp}_\r{u}(E)$, and let $a\in\r{Inv}(E,*)$.
Suppose that $\diamond\in\r{Mltp}(E)$ is such that $|\diamond-*|<(\|a\|\|a^{-1}_*\||*|)^{-1}$.
Then $\diamond$ is unital and $a\in\r{Inv}(E,\diamond)$. Moreover, for every $M>0$ there exists a universal constant
$C_M>0$ such that if
\begin{equation}\label{1804060138}
|*|,\|e_*\|,\|a\|,\|a_*^{-1}\|\leq M\hspace{2mm}\text{and}\hspace{2mm}|\diamond-*|\leq rM^{-3}
\end{equation}
for some constant $r$ with $0\leq r<1$, then $\diamond$ is unital, $a\in\r{Inv}(E,\diamond)$, and
$$\|a^{-1}_\diamond-a^{-1}_*\|\leq s^2C_M|\diamond-*|,$$
where $s=1+r(1-r)^{-1}$.
\end{theorem}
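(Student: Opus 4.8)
The plan is to transfer the whole question into the unital Banach algebra $\r{B}(E)$, whose multiplication (composition of operators) has norm at most $1$, so that Theorems \ref{1803162128} and \ref{1803312216} can be invoked with the cleanest constants, and then to reassemble the estimates. First the qualitative claim. Since $\|e_*\|=\|a*a^{-1}_*\|\leq|*|\,\|a\|\,\|a^{-1}_*\|$, the hypothesis $|\diamond-*|<(\|a\|\,\|a^{-1}_*\||*|)^{-1}$ forces $|\diamond-*|<\|e_*\|^{-1}$, so Theorem \ref{1803312216} (applied with the constant $|\diamond-*|\,\|e_*\|<1$) already shows $\diamond$ is unital. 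Associativity of $*$ gives $\f{l}_{*,a}^{-1}=\f{l}_{*,a^{-1}_*}$ and $\f{r}_{*,a}^{-1}=\f{r}_{*,a^{-1}_*}$ in $\r{B}(E)$, hence $|\f{l}_{*,a}^{-1}|,|\f{r}_{*,a}^{-1}|\leq|*|\,\|a^{-1}_*\|$; and bilinearity of $\diamond-*$ gives $|\f{l}_{\diamond,a}-\f{l}_{*,a}|,|\f{r}_{\diamond,a}-\f{r}_{*,a}|\leq|\diamond-*|\,\|a\|$. By hypothesis the product of these two bounds is $<1$, so Theorem \ref{1803162128}, applied inside $\r{B}(E)$ to the invertible element $\f{l}_{*,a}$ and the nearby element $\f{l}_{\diamond,a}$, shows that $\f{l}_{\diamond,a}$ is invertible in $\r{B}(E)$; likewise $\f{r}_{\diamond,a}$. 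Since $\diamond$ is now unital, the criterion recorded in the text before Theorem \ref{1803312216} (an element is invertible iff both of its one-sided multiplication operators are invertible in $\r{B}(E)$) gives $a\in\r{Inv}(E,\diamond)$.

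For the quantitative claim, first note $M\geq1$: from $\|e_*\|=\|e_**e_*\|\leq|*|\,\|e_*\|^2$ one gets $|*|\,\|e_*\|\geq1$, hence $M^2\geq1$. Thus (\ref{1804060138}) gives $|\diamond-*|\leq rM^{-3}<M^{-3}\leq(\|a\|\,\|a^{-1}_*\||*|)^{-1}$, so the previous paragraph applies; moreover, since $\f{l}_{\diamond,a}(a^{-1}_\diamond)=a\diamond a^{-1}_\diamond=e_\diamond$ and $\f{l}_{\diamond,a}$ is injective, $a^{-1}_\diamond=\f{l}_{\diamond,a}^{-1}(e_\diamond)$, and likewise $a^{-1}_*=\f{l}_{*,a}^{-1}(e_*)$. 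This yields
$$a^{-1}_\diamond-a^{-1}_*=\f{l}_{\diamond,a}^{-1}(e_\diamond-e_*)+\big(\f{l}_{\diamond,a}^{-1}-\f{l}_{*,a}^{-1}\big)(e_*),$$
so I must estimate the three norms on the right. Theorem \ref{1803312216}, applied with the constant $r_0:=|\diamond-*|\,\|e_*\|$ (note $r_0\leq rM^{-2}\leq r$, so the associated $s$, namely $(1-r_0)^{-1}$, is $\leq s$), gives via (\ref{1804052103}) that $\|e_\diamond-e_*\|\leq(1-r_0)^{-1}|\diamond-*|\,\|e_*\|^2\leq sM^2|\diamond-*|$. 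Theorem \ref{1803162128}, applied in $\r{B}(E)$ (whose unit has norm $1$ and whose multiplication has norm $\leq1$) with the constant $\widehat r:=|\f{l}_{\diamond,a}-\f{l}_{*,a}|\,|\f{l}_{*,a}^{-1}|\leq r$, gives $|\f{l}_{\diamond,a}^{-1}-\f{l}_{*,a}^{-1}|\leq(1-\widehat r)^{-1}|\f{l}_{*,a}^{-1}|^2\,|\f{l}_{\diamond,a}-\f{l}_{*,a}|\leq sM^4\cdot M|\diamond-*|=sM^5|\diamond-*|$, whence $|\f{l}_{\diamond,a}^{-1}|\leq|\f{l}_{*,a}^{-1}|+sM^5|\diamond-*|\leq M^2+sM^2\leq2sM^2$. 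Substituting into the displayed identity,
$$\|a^{-1}_\diamond-a^{-1}_*\|\leq2sM^2\cdot sM^2|\diamond-*|+sM^5|\diamond-*|\cdot M\leq(2M^4+M^6)\,s^2|\diamond-*|,$$
so $C_M:=2M^4+M^6$ (equivalently $3M^6$, using $M\geq1$) works; the degenerate case $r=0$ forces $\diamond=*$ and is trivial.

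There is no single hard step here: the real work is the bookkeeping that keeps the final constant dependent on $M$ alone, with all dependence on $r$ confined to the advertised factor $s^2$ — which one tracks by noting that replacing the constant $r$ in Theorem \ref{1803162128} or \ref{1803312216} by any smaller admissible value only decreases the associated $s$. The one genuinely load-bearing idea is the reduction of "$a\in\r{Inv}(E,\diamond)$", and of the difference $a^{-1}_\diamond-a^{-1}_*$, to statements about the operators $\f{l}_{\diamond,a},\f{r}_{\diamond,a}\in\r{B}(E)$, so that the two earlier perturbation theorems apply verbatim; and the small pitfall to avoid is invoking the "invertible iff both one-sided multiplications invertible" criterion before the unitality of $\diamond$ has been secured.
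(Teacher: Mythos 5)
Your proof is correct and follows essentially the same route as the paper: establish unitality of $\diamond$ via Theorem \ref{1803312216}, transfer invertibility to the operators $\f{l}_{\diamond,a},\f{r}_{\diamond,a}$ in $\r{B}(E)$ and apply Theorem \ref{1803162128} there, then bound $a^{-1}_\diamond-a^{-1}_*$ by splitting it into a term involving $e_\diamond-e_*$ and a term involving $\f{l}^{-1}_{\diamond,a}-\f{l}^{-1}_{*,a}$. The only (immaterial) difference is that you group the split as $\f{l}_{\diamond,a}^{-1}(e_\diamond-e_*)+(\f{l}_{\diamond,a}^{-1}-\f{l}_{*,a}^{-1})(e_*)$ whereas the paper uses the mirror decomposition through $e_\diamond$, so you bound $|\f{l}_{\diamond,a}^{-1}|$ where the paper bounds $\|e_\diamond\|$.
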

\begin{proof}
Suppose that $|\diamond-*|<(\|a\|\|a^{-1}_*\||*|)^{-1}$. Since $\|e_*\|\leq\|a\|\|a^{-1}_*\||*|$, we have
$|\diamond-*|<\|e_*\|^{-1}$. Thus, by Theorem  \ref{1803312216}, $\diamond$ is unital. We have,
\begin{equation}\label{1803162133}
|\f{l}_{\diamond,a}-\f{l}_{*,a}|\leq\|a\||\diamond-*|<(\|a^{-1}_*\||*|)^{-1}\leq |\f{l}^{-1}_{*,a}|^{-1}.
\end{equation}
Note that $\f{l}_{*,a_*^{-1}}=\f{l}^{-1}_{*,a}$.
From (\ref{1803162133}) and Theorem \ref{1803162128} it is concluded that $\f{l}_{\diamond,a}$ is invertible in $\r{B}(E)$.
Similarly, it is proved that $\f{r}_{\diamond,a}$ is invertible in $\r{B}(E)$. Thus, $a\in\r{Inv}(E,\diamond)$.

Suppose that $M>0$ be such that (\ref{1804060138}) is satisfied. We have $|\diamond-*|\leq rM^{-3}<(\|a\|\|a^{-1}_*\||*|)^{-1}$.
Thus, by the first part, $\diamond$ is unital and $a\in\r{Inv}(E,\diamond)$. Analogous to (\ref{1803162133}), we have
$$|\f{l}_{\diamond,a}-\f{l}_{*,a}|\leq r|\f{l}^{-1}_{*,a}|^{-1}.$$
Thus, by Theorem \ref{1803162128}, we have $|\f{l}^{-1}_{\diamond,a}-\f{l}^{-1}_{*,a}|\leq s|\f{l}^{-1}_{*,a}|^2|\f{l}_{\diamond,a}-\f{l}_{*,a}|$.
It follows that
\begin{equation}\label{1803250150}
|\f{l}^{-1}_{\diamond,a}-\f{l}^{-1}_{*,a}|\leq s|*|^2\|a_*^{-1}\|^2\|a\||\diamond-*|\leq sM^5|\diamond-*|
\end{equation}
We have,
\begin{equation}\label{1803250151}
\begin{split}
\|a^{-1}_\diamond-a^{-1}_*\|&=\|\f{l}^{-1}_{\diamond,a}(e_\diamond)-\f{l}^{-1}_{*,a}(e_*)\|\\
&\leq\|\f{l}^{-1}_{\diamond,a}(e_\diamond)-\f{l}^{-1}_{*,a}(e_\diamond)\|+\|\f{l}^{-1}_{*,a}(e_\diamond)-\f{l}^{-1}_{*,a}(e_*)\|\\
&\leq|\f{l}^{-1}_{\diamond,a}-\f{l}^{-1}_{*,a}|\|e_\diamond\|+|\f{l}^{-1}_{*,a}|\|e_\diamond-e_*\|.
\end{split}
\end{equation}
By Theorem \ref{1803312216}, we have
\begin{equation}\label{1804060243}
\|e_\diamond-e_*\|\leq s\|e_*\|^2|\diamond-*|\leq sM^2|\diamond-*|
\end{equation}
Also, we have,
\begin{equation}\label{1804060242}
\|e_\diamond\|\leq\|e_\diamond-e_*\|+\|e_*\|\hspace{2mm}\text{and}\hspace{2mm}|\f{l}^{-1}_{*,a}|\leq|*|\|a_*^{-1}\|\leq M^2.
\end{equation}
Now, the second part of the theorem follows from (\ref{1803250150}),(\ref{1803250151}),(\ref{1804060243}), and (\ref{1804060242}).
\end{proof}
The following is a direct corollary of Theorem  \ref{1803162143}.
\begin{corollary}
Let $(\star_n)_n$ be a sequence in $\r{Mltp}_\r{u}(E)$ converging to $\star\in\r{Mltp}_\r{u}(E)$. Then, for every $a\in E$ we have
$$\rho_\star(a)\subseteq\liminf\rho_{\star_n}(a):=\cup_{n}\cap_{k\geq n}\rho_{\star_k}(a),$$
or equivalently,
$$\sigma_\star(a)\supseteq\limsup\sigma_{\star_n}(a):=\cap_{n}\cup_{k\geq n}\sigma_{\star_k}(a).$$
\end{corollary}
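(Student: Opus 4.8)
The plan is to establish the resolvent inclusion $\rho_\star(a)\subseteq\liminf\rho_{\star_n}(a)$; the spectral statement is equivalent to it, since for a sequence of subsets of the scalar field the complement of $\liminf$ equals the $\limsup$ of the complements, and $\sigma_\bullet(a)$ is by definition the complement of $\rho_\bullet(a)$. So fix $a\in E$ and a point $\lambda\in\rho_\star(a)$, and put $b:=\lambda e_\star-a\in\r{Inv}(E,\star)$. It suffices to show that $\lambda e_{\star_n}-a\in\r{Inv}(E,\star_n)$ for all sufficiently large $n$; then $\lambda\in\cap_{k\geq n}\rho_{\star_k}(a)$ for some $n$, so $\lambda\in\liminf\rho_{\star_n}(a)$. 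The only delicate point is that the relevant element $\lambda e_{\star_n}-a$ changes with $n$, since the unit $e_{\star_n}$ moves; thus one cannot simply feed the fixed element $b$ into Theorem \ref{1803162143}, and the argument proceeds in two perturbation steps.

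\emph{First step: perturb the multiplication, keeping $b$ fixed.} Fix $M\geq\max\{|\star|,\|e_\star\|,\|b\|,\|b^{-1}_\star\|\}$. Since $\star_n\to\star$, for all large $n$ we have $|\star_n-\star|\leq r_nM^{-3}$ with $r_n:=M^3|\star_n-\star|$, and $0\leq r_n<1$ eventually. Theorem \ref{1803162143}, applied with $\star$ as $*$, $\star_n$ as $\diamond$, and $b$ playing the role of $a$, then gives $b\in\r{Inv}(E,\star_n)$ together with
$$\|b^{-1}_{\star_n}-b^{-1}_\star\|\leq s_n^2\,C_M\,|\star_n-\star|,\qquad s_n:=1+r_n(1-r_n)^{-1}.$$
As $r_n\to 0$ and $|\star_n-\star|\to 0$, the right-hand side tends to $0$; in particular $\sup_n\|b^{-1}_{\star_n}\|<\infty$, and of course $\sup_n|\star_n|<\infty$ by convergence.

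\emph{Second step: perturb the element inside the fixed algebra $(E,\star_n)$.} By Theorem \ref{1803312216} (which applies for large $n$, as $\star$ is unital and $|\star_n-\star|\to 0$) we have $\|e_{\star_n}-e_\star\|\to 0$, hence
$$\|(\lambda e_{\star_n}-a)-b\|=|\lambda|\,\|e_{\star_n}-e_\star\|\longrightarrow 0.$$
Because of the two uniform bounds obtained in the first step, $(\|b^{-1}_{\star_n}\|\,|\star_n|^2)^{-1}$ stays bounded away from $0$; hence, for all large $n$, $\|(\lambda e_{\star_n}-a)-b\|\leq\tfrac12(\|b^{-1}_{\star_n}\|\,|\star_n|^2)^{-1}$. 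Theorem \ref{1803162128}, applied in the unital complete normed algebra $(E,\star_n)$ with $b$ as the invertible element, $\lambda e_{\star_n}-a$ as its perturbation, and $r=\tfrac12$, now yields $\lambda e_{\star_n}-a\in\r{Inv}(E,\star_n)$, i.e.\ $\lambda\in\rho_{\star_n}(a)$, for all large $n$. This proves $\rho_\star(a)\subseteq\liminf\rho_{\star_n}(a)$, and the corollary follows.

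I expect the main obstacle to be precisely the bookkeeping around the moving unit: one must not conflate $\lambda e_\star-a$ with $\lambda e_{\star_n}-a$, and the ingredient that actually makes the second step work uniformly in $n$ is the bound $\sup_n\|b^{-1}_{\star_n}\|<\infty$, which is exactly what the quantitative second assertion of Theorem \ref{1803162143} provides. (When $\lambda=0$ the element does not move at all, $b=-a$, and the second step is vacuous; but the estimate $\|e_{\star_n}-e_\star\|\to 0$ takes care of every $\lambda$ regardless.)
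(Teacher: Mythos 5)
Your proof is correct. The paper itself gives no argument at all here --- it merely asserts that the corollary is ``direct'' from Theorem \ref{1803162143} --- so there is nothing to match line by line; your write-up is a legitimate and carefully executed implementation of that intended route. You are right that the one genuine subtlety is the moving unit: $\lambda e_{\star_n}-a$ is not a fixed element, so Theorem \ref{1803162143} alone does not literally apply, and your two-step scheme (first perturb the multiplication with $b=\lambda e_\star-a$ fixed, using the quantitative bound to get $\sup_n\|b^{-1}_{\star_n}\|<\infty$ and $\sup_n|\star_n|<\infty$; then perturb the element inside $(E,\star_n)$ via Theorem \ref{1803162128}, using $\|e_{\star_n}-e_\star\|\to 0$ from Theorem \ref{1803312216}) closes that gap cleanly. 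The only remark worth adding is that the subtlety can be made to disappear entirely: since $\f{l}_{\star_n,e_{\star_n}}=\r{id}$, one has $\f{l}_{\star_n,\lambda e_{\star_n}-a}=\lambda\,\r{id}-\f{l}_{\star_n,a}$ and likewise for right multiplications, and these converge in $\r{B}(E)$ to $\lambda\,\r{id}-\f{l}_{\star,a}$ and $\lambda\,\r{id}-\f{r}_{\star,a}$, which are invertible when $\lambda\in\rho_\star(a)$; openness of $\r{Inv}(\r{B}(E))$ together with the characterization of invertibility via left and right multiplication operators (stated in Section \ref{1804022216}) then gives $\lambda\in\rho_{\star_k}(a)$ for all large $k$ in one stroke. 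That shortcut is more elementary and probably closer to why the author felt entitled to the word ``direct,'' but your quantitative version proves the same thing and additionally keeps track of how fast the resolvents converge.
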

\section{Elementary aspects of the geometry of $\r{Mltp}(E)$}\label{1804022217}
In this section, we consider some elementary observations on the geometry of $\r{Mltp}(E)$ as a
subset of the normed space $\r{B}_2(E)$.
\begin{lemma}\label{1901231707}
Let $(\star_n)_n$ be a sequence in $\r{B}_2(E)$ such that it converges to $\star\in\r{B}_2(E)$.
Let $(a_n)_n,(b_n)_n$ be two convergent sequence in $E$ such that $a_n\to a$ and $b_n\to b$. Then, $a_n\star_n b_n\to a\star b$.
\end{lemma}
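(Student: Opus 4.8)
The plan is to estimate the difference $a_n \star_n b_n - a \star b$ by inserting intermediate terms so that in each term only one of the three ``inputs'' (the multiplication, the left argument, the right argument) varies at a time. Concretely, I would write
\begin{equation*}
a_n \star_n b_n - a \star b = (\star_n - \star)(a_n, b_n) + \star(a_n - a, b_n) + \star(a, b_n - b),
\end{equation*}
which is a valid identity by bilinearity of $\star$.

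Next I would bound each of the three summands using the operator-norm inequality $|T(x,y)| \le |T|\,\|x\|\,\|y\|$ valid for every $T \in \r{B}_2(E)$. This gives
\begin{equation*}
\|a_n \star_n b_n - a \star b\| \le |\star_n - \star|\,\|a_n\|\,\|b_n\| + |\star|\,\|a_n - a\|\,\|b_n\| + |\star|\,\|a\|\,\|b_n - b\|.
\end{equation*}
Since $(a_n)_n$ and $(b_n)_n$ converge, they are bounded, say $\|a_n\| \le K$ and $\|b_n\| \le K$ for all $n$; also $|\star_n - \star| \to 0$, $\|a_n - a\| \to 0$, and $\|b_n - b\| \to 0$ by hypothesis. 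Hence each of the three terms on the right tends to $0$, so the left side tends to $0$, which is exactly the claim $a_n \star_n b_n \to a \star b$.

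There is no real obstacle here; the only point requiring a moment's care is the bookkeeping of which factor is allowed to be ``large'' (bounded but not small) and which must be ``small'' in each term — one wants the small factor ($|\star_n-\star|$, $\|a_n-a\|$, or $\|b_n-b\|$) multiplied only by uniformly bounded quantities, which is why the telescoping is arranged with $\star_n - \star$ paired with $(a_n, b_n)$, then $\star$ paired with the increments. This is a routine $\varepsilon$-argument once the decomposition above is in place.
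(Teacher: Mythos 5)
Your proof is correct and follows essentially the same telescoping decomposition as the paper's, differing only in the (immaterial) order in which the two arguments are varied, so your bound has $\|b_n\|$ and $\|a\|$ where the paper's has $\|a_n\|$ and $\|b\|$. The boundedness-of-convergent-sequences remark correctly fills in the one detail the paper leaves implicit.
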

\begin{proof}
It follows from the following simple inequality:
$$\|a_n\star_n b_n- a\star b\|\leq|\star_n-\star|\|a_n\|\|b_n\|+|\star|\|a_n\|\|b_n-b\|+|\star|\|b\|\|a_n-a\|.$$
\end{proof}
\begin{theorem}\label{1803191105}
$\r{Mltp}(E)$ is a closed subset of $\r{B}_2(E)$.
\end{theorem}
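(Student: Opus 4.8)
The plan is to show that the complement of $\r{Mltp}(E)$ in $\r{B}_2(E)$ behaves well under limits, or equivalently, that $\r{Mltp}(E)$ is sequentially closed (which suffices in a metric space). So I would take a sequence $(\star_n)_n$ in $\r{Mltp}(E)$ converging to some $\star\in\r{B}_2(E)$, and I need to prove that $\star$ is again associative, i.e. that $\star\in\r{Mltp}(E)$.

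The key point is that associativity of $\star_n$ is the identity $(a\star_n b)\star_n c = a\star_n(b\star_n c)$ holding for all $a,b,c\in E$, and I want to pass to the limit in $n$. Fix arbitrary $a,b,c\in E$. First I would apply Lemma \ref{1901231707} to the constant sequences $a_n:=a$, and to the sequence $b_n := b\star_n c \to b\star c$ (this convergence is itself an instance of Lemma \ref{1901231707} with constant sequences $b_n=b$, $c_n=c$); this yields $a\star_n(b\star_n c)\to a\star(b\star c)$. Symmetrically, with $a_n := a\star_n b \to a\star b$ and $c_n := c$ constant, Lemma \ref{1901231707} gives $(a\star_n b)\star_n c\to (a\star b)\star c$. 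Since the two sequences $\big((a\star_n b)\star_n c\big)_n$ and $\big(a\star_n(b\star_n c)\big)_n$ are equal term-by-term by associativity of each $\star_n$, their limits coincide, so $(a\star b)\star c = a\star(b\star c)$. As $a,b,c$ were arbitrary, $\star$ is associative, hence $\star\in\r{Mltp}(E)$, and $\r{Mltp}(E)$ is (sequentially, hence topologically) closed in the normed space $\r{B}_2(E)$.

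I don't expect a genuine obstacle here: the only thing to be slightly careful about is that Lemma \ref{1901231707} is stated for sequences in $E$ and in $\r{B}_2(E)$ that converge, so I must make sure each sequence I feed into it genuinely converges — in particular that $b\star_n c\to b\star c$ and $a\star_n b\to a\star b$, which are themselves immediate applications of the lemma. One could also phrase the argument without sequences by noting that the map $\r{B}_2(E)\to\r{B}_3(E)$ (bounded trilinear operators) sending $\star\mapsto\big((a,b,c)\mapsto (a\star b)\star c - a\star(b\star c)\big)$ is continuous, and $\r{Mltp}(E)$ is the preimage of $\{0\}$; but the sequential argument via Lemma \ref{1901231707} is cleaner and matches the tools already developed, so that is the route I would write up.
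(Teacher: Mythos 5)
Your proof is correct and follows essentially the same route as the paper: take a convergent sequence in $\r{Mltp}(E)$ and use Lemma \ref{1901231707} repeatedly to pass associativity to the limit. Your write-up just spells out more explicitly which auxiliary sequences are fed into the lemma, which the paper leaves implicit.
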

\begin{proof}
Let $(\star_n)_n$ be a sequence in $\r{Mltp}(E)$ such that it converges to $\star\in\r{B}_2(E)$.
We must show that $\star\in\r{Mltp}(E)$. Let $a,b,c\in E$. By permanent application of Lemma \ref{1901231707}, we have
$$(a\star b)\star c=(\lim_n a\star_nb)\star c=\lim_n (a\star_n b)\star_nc=\lim_n a\star_n (b\star_nc)=a\star(\lim_n b\star_n c)=a\star(b\star c).$$
Thus, $\star$ is associative and the proof is complete.
\end{proof}
\begin{theorem}\label{1803191105}
$\r{Mltp}_\r{u}(E)$ and $\r{Mltp}_\r{c}(E)$ are, respectively, open and closed in $\r{Mltp}(E)$.
\end{theorem}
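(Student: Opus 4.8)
The plan is to treat the two assertions separately, each being a short deduction from results already in hand.

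For the openness of $\r{Mltp}_\r{u}(E)$ in $\r{Mltp}(E)$, I would fix $*\in\r{Mltp}_\r{u}(E)$ and consider the open ball $B\subset\r{B}_2(E)$ of radius $\|e_*\|^{-1}$ centered at $*$. If $\diamond\in B\cap\r{Mltp}(E)$, then $r:=\|e_*\|\,|\diamond-*|$ satisfies $0\leq r<1$ and $|\diamond-*|=r\|e_*\|^{-1}\leq r\|e_*\|^{-1}$, so Theorem~\ref{1803312216} applies and shows that $\diamond$ is unital. Hence $B\cap\r{Mltp}(E)\subseteq\r{Mltp}_\r{u}(E)$, and since $*$ was arbitrary, $\r{Mltp}_\r{u}(E)$ is open in $\r{Mltp}(E)$.

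For the closedness of $\r{Mltp}_\r{c}(E)$ in $\r{Mltp}(E)$, let $(\star_n)_n$ be a sequence in $\r{Mltp}_\r{c}(E)$ converging to some $\star\in\r{Mltp}(E)$. Given $a,b\in E$, apply Lemma~\ref{1901231707} to the constant sequences $a_n=a$ and $b_n=b$ to get $a\star_n b\to a\star b$ and $b\star_n a\to b\star a$. Since each $\star_n$ is commutative, $a\star_n b=b\star_n a$ for every $n$, so the two limits coincide and $a\star b=b\star a$. As $a,b$ are arbitrary, $\star$ is commutative, i.e.\ $\star\in\r{Mltp}_\r{c}(E)$; hence $\r{Mltp}_\r{c}(E)$ is closed in $\r{Mltp}(E)$.

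Neither part presents a genuine obstacle; the only point that deserves care is in the first part, where the ``for some $r$'' in the hypothesis of Theorem~\ref{1803312216} must be read as follows: every point of the ball of radius $\|e_*\|^{-1}$ that is strictly interior (equivalently, every $\diamond$ with $|\diamond-*|<\|e_*\|^{-1}$) admits an admissible $r<1$, which is precisely what produces an open neighborhood of $*$. One may additionally remark that, since $\r{Mltp}(E)$ is itself closed in $\r{B}_2(E)$ by the preceding theorem, $\r{Mltp}_\r{c}(E)$ is in fact closed in all of $\r{B}_2(E)$.
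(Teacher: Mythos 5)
Your proof is correct and follows exactly the route of the paper: openness of $\r{Mltp}_\r{u}(E)$ from Theorem \ref{1803312216} applied on a ball of radius $\|e_*\|^{-1}$, and closedness of $\r{Mltp}_\r{c}(E)$ by passing to the limit in $a\star_n b=b\star_n a$ via Lemma \ref{1901231707}. The paper merely states these two deductions in one line each; your write-up supplies the (correct) details.
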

\begin{proof}
It follows from Theorem \ref{1803312216} that $\r{Mltp}_\r{u}(E)$ is open in $\r{Mltp}(E)$. By Lemma \ref{1901231707},
it is easily verified that any limit point of $\r{Mltp}_\r{c}(E)$ is a commutative multiplication.
\end{proof}
\begin{theorem}\label{1901272153}
The map $\star\mapsto e_\star$ from $\r{Mltp}_\r{u}(E)$ to $E$ is continuous. Moreover, if $\star$ converges to a point in the boundary of
$\r{Mltp}_\r{u}(E)$ in $\r{Mltp}(E)$, then $\|e_\star\|$ converges to $\infty$.
\end{theorem}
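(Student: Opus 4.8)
The plan is to treat the two assertions separately. For the continuity of $\star\mapsto e_\star$ I would simply invoke Theorem \ref{1803312216}: fixing $\star\in\r{Mltp}_\r{u}(E)$ and taking any $\diamond\in\r{Mltp}(E)$ with $|\diamond-\star|\leq\tfrac12\|e_\star\|^{-1}$, set $r:=|\diamond-\star|\|e_\star\|\leq\tfrac12$ and $s:=1+r(1-r)^{-1}\leq2$; then Theorem \ref{1803312216} gives $\diamond\in\r{Mltp}_\r{u}(E)$ and $\|e_\diamond-e_\star\|\leq s|\diamond-\star|\|e_\star\|^2\leq2\|e_\star\|^2|\diamond-\star|$, which tends to $0$ as $\diamond\to\star$. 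Since $\star$ is arbitrary, continuity follows.

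For the second assertion I would argue by contradiction. Let $(\star_n)_n$ be a sequence in $\r{Mltp}_\r{u}(E)$ converging in $\r{Mltp}(E)$ to a point $\star_\infty$ on the boundary of $\r{Mltp}_\r{u}(E)$. Because $\r{Mltp}_\r{u}(E)$ is open in $\r{Mltp}(E)$ (Theorem \ref{1803191105}), the point $\star_\infty$ lies in $\r{Mltp}(E)\setminus\r{Mltp}_\r{u}(E)$, so $(E,\star_\infty)$ is not unital. Suppose $\|e_{\star_n}\|$ does not tend to $\infty$. Then there are a constant $M$ and a subsequence $(\star_{n_k})_k$ with $\|e_{\star_{n_k}}\|\leq M$ and, after enlarging $M$, $|\star_{n_k}|\leq M$ for all $k$. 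Put $u_k:=e_{\star_{n_k}}$, so $u_k\star_{n_k}x=x=x\star_{n_k}u_k$ for every $x\in E$.

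The heart of the argument is to show that $(u_k)_k$ is Cauchy, hence convergent since $E$ is complete. Since $u_l$ is a right unit for $\star_{n_l}$ and $u_k$ a left unit for $\star_{n_k}$, one has $u_k\star_{n_l}u_l=u_k$ and $u_k\star_{n_k}u_l=u_l$; writing $u_k-u_l=(u_k\star_{n_l}u_l-u_k\star_\infty u_l)+(u_k\star_\infty u_l-u_k\star_{n_k}u_l)$ and using $\|u_k\|,\|u_l\|\leq M$ then yields $\|u_k-u_l\|\leq M^2(|\star_{n_l}-\star_\infty|+|\star_{n_k}-\star_\infty|)\to0$. Let $u:=\lim_k u_k$. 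Applying Lemma \ref{1901231707} to $u_k\to u$, the constant sequence $x$, and $\star_{n_k}\to\star_\infty$ gives $x=u_k\star_{n_k}x\to u\star_\infty x$, so $u\star_\infty x=x$ for all $x\in E$; symmetrically $x\star_\infty u=x$. Hence $\star_\infty$ is unital with $e_{\star_\infty}=u$, contradicting that $\star_\infty$ is a boundary point of $\r{Mltp}_\r{u}(E)$. This contradiction proves $\|e_{\star_n}\|\to\infty$.

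The step I expect to be the main obstacle is exactly this passage to a limit of the unit elements: since $E$ need not be finite dimensional, boundedness of $(e_{\star_n})$ does not by itself produce a convergent subsequence, and the trick is to use both the left and right unit identities together with the mixed products $u_k\star_{n_l}u_l$ to force the sequence of units to be Cauchy directly.
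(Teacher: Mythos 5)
Your proof is correct. The first half coincides with the paper's: both deduce continuity of $\star\mapsto e_\star$ directly from Inequality (\ref{1804052103}) of Theorem \ref{1803312216}. For the second half you take a genuinely different route. The paper's argument is a one-liner: if $\|e_{\star_n}\|\leq M$ along a subsequence, then for $n$ large one has $|\star_\infty-\star_n|\leq\tfrac12\|e_{\star_n}\|^{-1}$, so Theorem \ref{1803312216} (applied with $*=\star_n$ and $\diamond=\star_\infty$) forces $\star_\infty$ to be unital, contradicting openness of $\r{Mltp}_\r{u}(E)$. You instead bypass Theorem \ref{1803312216} and prove by hand that the units $u_k=e_{\star_{n_k}}$ form a Cauchy sequence via the identity $u_k-u_l=u_k\star_{n_l}u_l-u_k\star_{n_k}u_l$, then pass to the limit with Lemma \ref{1901231707} to exhibit a unit for $\star_\infty$; the estimate $\|u_k-u_l\|\leq M^2\bigl(|\star_{n_l}-\star_\infty|+|\star_{n_k}-\star_\infty|\bigr)$ is correct, and your closing remark is well taken --- boundedness of the units alone gives no convergent subsequence in infinite dimensions, so the Cauchy trick is genuinely needed if one does not reuse the perturbation theorem. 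The trade-off: the paper's version is shorter because it recycles the quantitative Neumann-series machinery behind Theorem \ref{1803162128}, while yours is more elementary and self-contained, relying only on the joint continuity of Lemma \ref{1901231707} and completeness of $E$. Both are valid.
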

\begin{proof}
The first part follows directly from Inequality (\ref{1804052103}). For the second part, let $(\star_n)_n$ be a sequence in
$\r{Mltp}_\r{u}(E)$ converging to $\star$ such that $\star$ is in the boundary of $\r{Mltp}_\r{u}(E)$ in $\r{Mltp}(E)$. Since
$\r{Mltp}_\r{u}(E)$ is open in $\r{Mltp}(E)$, $\star$ is not unital. If the sequence $\|e_{\star_n}\|$ is bounded, then Theorem \ref{1803312216}
shows that $\star$ must be unital. The proof is complete.
\end{proof}
\begin{theorem}
Let $a$ be a nonzero element of $E$. Let $U_a:=\{\star\in\r{Mltp}(E):a\in\r{Inv}(E,\star)\}$.
Then $U_a$ is open in $\r{Mltp}_\r{u}(E)$ and the map $\star\mapsto a_\star^{-1}$ from $U_a$ to $E$ is continuous.
\end{theorem}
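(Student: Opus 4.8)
The plan is to read off both assertions directly from Theorem~\ref{1803162143}, the remaining work being mere bookkeeping. First note that $U_a\subseteq\r{Mltp}_\r{u}(E)$: the statement $a\in\r{Inv}(E,\star)$ presupposes that $(E,\star)$ has a unit, so every $\star\in U_a$ is automatically unital. Consequently it suffices to prove that $U_a$ is open in $\r{Mltp}(E)$, since then it is a fortiori open in the subspace $\r{Mltp}_\r{u}(E)$. Fix $\star\in U_a$. Because $E\neq 0$ we have $e_\star\neq 0$, hence $a^{-1}_\star\neq 0$ (from $a\star a^{-1}_\star=e_\star$) and also $|\star|>0$ (a zero multiplication admits no unit); thus $\delta:=(\|a\|\,\|a^{-1}_\star\|\,|\star|)^{-1}$ is a well-defined positive real number, using also $a\neq 0$. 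By the first assertion of Theorem~\ref{1803162143} (applied with $*=\star$), every $\diamond\in\r{Mltp}(E)$ with $|\diamond-\star|<\delta$ is unital and satisfies $a\in\r{Inv}(E,\diamond)$, i.e. $\diamond\in U_a$. Hence the intersection of $\r{Mltp}(E)$ with the open ball of radius $\delta$ about $\star$ in $\r{B}_2(E)$ lies inside $U_a$, so $U_a$ is open.

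For continuity, fix $\star_0\in U_a$ and choose $M>0$ so large that $|\star_0|,\|e_{\star_0}\|,\|a\|,\|a^{-1}_{\star_0}\|\leq M$. Given $\diamond\in U_a$ with $0<|\diamond-\star_0|<\tfrac12 M^{-3}$, set $r:=M^3|\diamond-\star_0|$, so that $0\le r<\tfrac12$ and $|\diamond-\star_0|=rM^{-3}$. Then hypothesis (\ref{1804060138}) of Theorem~\ref{1803162143} holds with this $M$ and this $r$, and with $s=1+r(1-r)^{-1}<2$ the theorem yields $\|a^{-1}_\diamond-a^{-1}_{\star_0}\|\leq s^2 C_M|\diamond-\star_0|<4C_M|\diamond-\star_0|$. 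Since $C_M$ depends only on $M$, the right-hand side tends to $0$ as $\diamond\to\star_0$; this proves continuity of $\star\mapsto a^{-1}_\star$ at $\star_0$, and $\star_0\in U_a$ was arbitrary. (Sequential continuity would do equally well, as $U_a$ is metrizable.)

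There is essentially no obstacle beyond Theorem~\ref{1803162143} itself; the two points needing a little care are (i) recording that $U_a$ already sits inside $\r{Mltp}_\r{u}(E)$, which is what lets ``open in $\r{Mltp}(E)$'' suffice, and (ii) restricting the size of the perturbation so that the factor $s$ stays bounded (here by $2$), which converts the estimate of Theorem~\ref{1803162143} into the desired continuity. The degenerate case $a=0$ is excluded by hypothesis and would in any event give $U_a=\emptyset$.
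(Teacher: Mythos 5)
Your proof is correct and follows exactly the route the paper intends: the paper's own proof consists of the single line ``It follows easily from Theorem \ref{1803162143}'', and your argument is precisely the careful unpacking of that theorem's two parts (the first for openness, the second with $r<\tfrac12$ and $s<2$ for continuity). The only detail worth keeping is your observation that $U_a\subseteq\r{Mltp}_\r{u}(E)$, which the paper leaves implicit.
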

\begin{proof}
It follows easily from Theorem \ref{1803162143}.
\end{proof}
Let us mention that for finite dimensional $E$, $\r{Mltp}(E)$ can be considered as an affine algebraic variety:
For any natural number $n$, let $\bb{C}^n$ denote the direct sum of $n$-copy of complex field $\bb{C}$.
Let $\alpha_1,\ldots,\alpha_n$ denote the standard basis of $\bb{C}^n$. Then any bilinear operator $\star\in\r{B}_2(\bb{C}^n)$
is exactly determined by a $n^3$-tuple $(\lambda_{ijk})_{1\leq i,j,k\leq n}$ of complex numbers such that
$\alpha_i\star\alpha_j=\sum_{k=1}^n\lambda_{ijk}\alpha_k$. It is easily checked that $\star$ belongs to $\r{Mltp}(\bb{C}^n)$ iff
the associated $n^3$-tuple $(\lambda_{ijk})$ is in the zero locus of the following class of quadratic polynomials:
$$\sum_\ell\lambda_{ij\ell}\lambda_{\ell kp}-\sum_q\lambda_{iqp}\lambda_{jkq}\hspace{10mm}(1\leq i,j,k,p\leq n).$$
In other word, $\r{Mltp}(\bb{C}^n)$ is identified with a Zariski-closed algebraic subset of affine space $\bb{C}^{n^3}$.
Similar to our result (Theorem \ref{1803191105}),
it has been shown in \cite{Gabriel1} that $\r{Mltp}_\r{u}(\bb{C}^n)$ is Zariski-open in $\r{Mltp}(\bb{C}^n)$.
(Note that the the $|\cdot|$-topology of $\r{B}_2(\bb{C}^n)$ coincides with the Euclidean topology of $\bb{C}^{n^3}$, and the Zariski
topology is strictly coarser than Euclidean topology.)

Let us observe that $\r{Mltp}(E)$ is a `large' subset of $\r{B}_2(E)$: First of all, note that
if $F$ be a $n$-dimensional vector space, then any semigroup $S$ with $n$ elements together with any vector basis $(\alpha_s)_{s\in S}$
of $F$, indexed by elements of $S$,  define a multiplication $\star\in\r{Mltp}(F)$ by $\alpha_s\star\alpha_{s'}=\alpha_{ss'}$.
Now, suppose that $E_1,E_2$ are Banach spaces and let $\star_1\in\r{Mltp}(E_1)$ and $\star_2\in\r{Mltp}(E_2)$.
Then one can define a multiplication $\star\in\r{Mltp}(E_1\oplus E_2)$ by
$$(x_1,x_2)\star(x'_1,x'_2):=(x_1\star_1 x_1',x_2\star_2 x'_2).$$
Thus, $(\star_1,\star_2)\mapsto\star$ defines an embedding $\r{Mltp}(E_1)\times\r{Mltp}(E_2)\to\r{Mltp}(E_1\oplus E_2)$.
These observations show that for an arbitrary Banach space $E$, one can define an element of $\r{Mltp}(E)$, simply by choosing a
$n$-dimensional subspace $E_1\subset E$, any topological complement $E_2$ of $E_1$ in $E$, any semigroup with $n$ elements,
and any vector basis of $E_1$.

In the following, we shall introduce a quotient $\bb{M}(E)$ of $\r{Mltp}(E)$ which is much smaller than $\r{Mltp}(E)$ and is more reasonable for study.

Let $\r{Hom}(E)\subset\r{B}(E)$
be the topological group of linear homeomorphisms from $E$ onto $E$, with $|\cdot|$-topology.
$\r{Hom}(E)$ acts on $\r{Mltp}(E)$ from left in a canonical way: Let $t\in\r{Hom}(E)$ and $*\in\r{Mltp}(E)$.
Then $t\cdot*=*_t$, the action of $t$ on $*$, is defined to be the continuous multiplication on $E$ given by $a*_tb:=t^{-1}(t(a)*t(b))$.
Note that the linear map $t^{-1}$ becomes a homeomorphic algebra isomorphism from $(E,*)$ onto $(E,*_t)$.

We show that the described group action is continuous: For any $s,t\in\r{B}(E)$ let
$\Lambda_t,\Gamma_{s,t}:\r{B}_2(E)\to\r{B}_2(E)$ be defined respectively by $\varphi\mapsto t\varphi,\varphi\mapsto\varphi(s,t)$.
Then these are bounded linear operators on $\r{B}_2(E)$, and it is not hard to see that $|\Lambda_t|=|t|$
and $|\Gamma_{s,t}|=|s||t|$. For $s,t\in\r{Hom}(E)$ and $\diamond,*\in\r{Mltp}(E)$ we have,
\begin{equation}\label{1803252023}
\begin{split}
|\diamond_s-*_t|&=|\Lambda_{s^{-1}}\Gamma_{s,s}(\diamond)-\Lambda_{t^{-1}}\Gamma_{t,t}(*)|\\
&\leq|\Lambda_{s^{-1}}\Gamma_{s,s}(\diamond)-\Lambda_{t^{-1}}\Gamma_{s,s}(\diamond)|+
|\Lambda_{t^{-1}}\Gamma_{s,s}(\diamond)-\Lambda_{t^{-1}}\Gamma_{t,t}(*)|\\
&\leq|\Lambda_{s^{-1}-t^{-1}}||\Gamma_{s,s}(\diamond)|+
|\Lambda_{t^{-1}}||\Gamma_{s,s}(\diamond)-\Gamma_{t,t}(*)|\\
&\leq|\diamond||s|^2|s^{-1}-t^{-1}|+|t^{-1}||\Gamma_{s,s}(\diamond)-\Gamma_{t,t}(*)|.
\end{split}
\end{equation}
We also have,
\begin{equation}\label{1803252051}
\begin{split}
&|\Gamma_{s,s}(\diamond)-\Gamma_{t,t}(*)|\\
\leq\quad&|\Gamma_{s,s}(\diamond)-\Gamma_{s,t}(\diamond)|+|\Gamma_{s,t}(\diamond)-\Gamma_{t,t}(\diamond)|
+|\Gamma_{t,t}(\diamond)-\Gamma_{t,t}(*)|\\
\leq\quad&|\diamond|(|s|+|t|)|s-t|+|t|^2|\diamond-*|.
\end{split}
\end{equation}
It follows from (\ref{1803252023}) and (\ref{1803252051}) that the canonical action $\r{Hom}(E)\times\r{Mltp}(E)\to\r{Mltp}(E)$
is continuous. We denote the orbit space $\r{Mltp}(E)/\r{Hom}(E)$ by $\bb{M}(E)$. It is appropriate to call
$\bb{M}(E)$ moduli space of multiplications on $E$.

A natural problem related to geometry of Banach spaces arises:
\begin{problem}
Describe the `geometry' of $\bb{M}(E)$.
\end{problem}
It is clear that if two Banach spaces $E,F$ are homeomorphic linear isomorphic then $\r{Mltp}(E)$ and $\bb{M}(E)$
are respectively homeomorphic with $\r{Mltp}(F)$ and $\bb{M}(F)$.
Thus, $\r{Mltp}(E)$ and $\bb{M}(E)$ (and their invariants) can be considered as invariants
of $E$ in category of topological vector spaces.
However, the study of $\bb{M}(E)$ is a hard work even if $E$ is finite dimensional.

In the following we remark some aspects of the geometry of $\bb{M}(E)$.
(For a multiplication $*\in\r{Mltp}(E)$, its orbit $\{t\cdot *:t\in\r{Hom}(E)\}$, mutually as an element in $\bb{M}(E)$ and as a subset of
$\r{Mltp}(E)$, is denoted by $\bar{*}$.)
\begin{remark}\label{1804011340}
\emph{
\begin{enumerate}
\item[(i)]
The only orbit $\bar{*}$ which is closed in $\r{Mltp}(E)$ is the orbit of the zero multiplication $0$:
We have $\bar{0}=\{0\}$ and thus $\bar{0}$ is closed. Let $*\neq0$ be in $\r{Mltp}(E)$. It is clear that $0$ does not belong to $\bar{*}$.
For every $n\geq1$, let $t_n:=n^{-1}\r{id}\in\r{Hom}(E)$. Then $|t_n\cdot *|\to0$. This shows that $0$ is a limit point of
$\bar{*}$. Therefore $\bar{*}$ is not closed in $\r{Mltp}(E)$ if $*\neq0$.
Note that since $0$ is in the closure of any orbit, $\bar{0}$ belongs to any nonempty closed subset of $\bb{M}(E)$.
\item[(ii)] A multiplication $*\in\r{Mltp}(E)$
is called  rigid \cite[Definition 3.13]{Jarosz1} if there exist $\epsilon>0$ such that for every $\diamond\in\r{Mltp}(E)$
with $|\diamond-*|<\epsilon$ there exists $t\in\r{Hom}(E)$ with $\diamond=t\cdot *$. Thus, $*$ is rigid iff
$\bar{*}$ is an open subset of $\r{Mltp}(E)$, or equivalently, $\bar{*}$ is an isolated point of $\bb{M}(E)$.
Johnson has shown \cite[Theorem 2.1]{Johnson1} that for $*\in\r{Mltp}(E)$ if the second and the third
Hochschild cohomology groups (\cite{Johnson2}) of $(E,*)$ with values in $E$ vanish, then $*$ is rigid.
\item[(iii)] The algebraic geometry of $\bb{M}(\bb{C}^5)$, has been considered by G. Mazzola in \cite{Mazzola1}.
He has shown that $\bb{M}_\r{u}(\bb{C}^5)$, the moduli space of unital algebras on $\bb{C}^5$, has exactly $59$ members.
Algebraic geometry of $\bb{M}(\bb{C}^n)$, for $n=2,3,4$ has been considered respectively in \cite{Bodin1,FialowskiPenkava1,FialowskiPenkava2}.
\item[(iv)] Finding numerical functions (with some continuity properties) on $\r{Mltp}(E)$ and $\bb{M}(E)$
may be useful. By Theorem \ref{1901272153}, the function $\star\mapsto\|e_\star\|$ is continuous on $\r{Mltp}_\r{u}(E)$.
It is easily seen that for a $\star\in\r{Mltp}(E)$, the Hochschild cohomology groups (\cite{Johnson2}) of $(E,\star)$
with values in $E$ and its topological dual, is only depends on $\bar{\star}$. Thus, for instance, the dimensions of these groups
can be considered as a function on $\bb{M}(E)$ with discrete values.
\end{enumerate}
}
\end{remark}
We end this note by some examples of families of continuous multiplications which the above theorems and inequalities may be
effectively applied to them.
\begin{example}\label{1901251419}
\emph{Let $X$ be a compact metric space and $\b{M}_X$ be the Banach space of complex-valued continuous functions on $X\times X$,
with $\sup$-norm. Any Borel complex measure $\mu$ on $X$, induces a $\star_\mu\in\r{Mltp}(\b{M}_X)$ defined by
$$(f\star_\mu g)(x,y):=\int_Xf(x,z)g(z,y)\r{d}\mu(z)\hspace{10mm}(f,g\in\b{M}_X,x,y\in X).$$
In the case that $\mu$ is a probability measure, $(\b{M}_X,\star_\mu)$ is a Banach algebra. Some algebraic aspects of this
class of Banach algebras has been considered in \cite{Sadr1}. It is easily seen that the family $\{\star_\mu\}_\mu$ is continuously
depends on $\mu$ as $\mu$ varies in the Banach space $\c{M}(X)$ of Borel complex measures, with total variation norm.
Another multiplication on $\b{M}_X$, is the point-wise multiplication which makes $\b{M}_X$ to a commutative C*-algebra.
Note that in the case that $X:=\{1,\ldots,n\}$ and $\mu$ is the counting measure, $(\b{M}_X,\star_\mu)$ is isomorphic to the algebra
of $n\times n$ matrixes.
}
\end{example}
\begin{example}
\emph{Let $X$ be a locally compact space and $\c{M}(X)$ be as in Example \ref{1901251419}.
Any continuous semigroup multiplication $s:X\times X\to X$ defines a $\star_s\in\r{Mltp}(\c{M}(X))$ as follows.
$$(\mu\star_s\nu)(B):=\mu\times\nu(S^{-1}(B))\hspace{10mm}(\mu,\nu\in\c{M}(X),B\subseteq X).$$
}
\end{example}
\begin{example}
\emph{Let $A$ and $B$ be Banach algebras. Consider the Banach space $A\oplus B$ with an $\ell_p$-norm ($1\leq p\leq\infty$).
Any continuous algebra homomorphism $T:B\to A$ induces a $\star_T\in\r{Mltp}(A\oplus B)$ defined by
$$(a,b)\star_T(a',b'):=(aa'+aT(b')+T(b)a,bb')\hspace{10mm}((a,b),(a',b')\in A\oplus B).$$
It is easily verified that the family $\{\star_T\}_T$ is continuously depends on $T$ when $T$ varies in the space of continuous algebra
homomorphisms as a subspace of the Banach space of bounded linear operators from $B$ to $A$. Some properties of algebras of the type
$(A\oplus B,\star_T)$ has been considered by some authors. See \cite{PourabbsRazi1} and references therein.
}
\end{example}
\bibliographystyle{amsplain}

\end{document}